\newcommand{\Hyp}{\mathbb{H}}
\newcommand{\C}{\mathcal{C}}
\newcommand{\R}{\mathbb{R}}
\newcommand{\Z}{\mathbb{Z}}
\newcommand{\bS}{\mathbb{S}}
\newcommand{\cC}{\mathcal{C}}
\newcommand{\fa}{\mathfrak{a}}
\newcommand{\G}{\Gamma}
\newcommand{\tv}{\rightarrow}
\DeclareMathOperator{\PSL}{PSL}
\DeclareMathOperator{\Teich}{Teich}
\DeclareMathOperator{\Isom}{Isom}
\DeclareMathOperator{\Id}{Id}
\newtheorem{theorem}{Theorem}[section]
\newtheorem{definition}[theorem]{Definition}
\newtheorem{lemma}[theorem]{Lemma}
\DeclareMathOperator{\Hom}{Hom}
\author{Olivier Glorieux}
\title{The embedding of the space of negatively curved surfaces in geodesic currents}
\begin{document}
\maketitle

\begin{abstract}
We prove by an algebraic method that  the embedding of the Teichmüller space in the space of geodesic currents  is totally linearly independent.  We prove a similar result for all negatively curved surfaces using an ergodic argument.
\end{abstract}
\section{Introduction}
Let $S$ be a closed connected oriented surface of genus $g\geq 2$. By the  uniformization theorem, the Teichmüller space of $S$, $\Teich(S)$, identifies with the moduli space of marked hyperbolic structures. That is 
$\Teich(S)  = \{ (X,f_X)\}/\sim$
where $X $ is an hyperbolic surface and  $f_X \,: \,S \tv X $ is  an orientation preserving homeomorphism. The equivalence relation is given by $(X,f_X) \sim (Y,f_Y)$ if and only if there exists an isometry $g \, : \, X \tv Y$ such that $g\circ f_X $ is isotopic to $f_Y$. 

 The universal cover $\tilde{X}$ of a hyperbolic surface identifies with the hyperbolic plane $\Hyp^2$, and the marking induces a representation of $(f_X)_* \, : \, \pi_1(S) \tv \pi_1(X) < \Isom^+(\Hyp^2) \simeq \PSL_2(\R)$, well defined up to a conjugacy by an isometry of $\Hyp^2$. 
This give an embedding of the  Teichmüller space  into  the space of representations: $Rep:=\Hom(\pi_1(S) \tv \PSL_2(\R))/\PSL_2(\R).$
The Teichmüller space actually is one of the two connected components of $Rep$ which consists of only discrete and faithful representation. The other such component is $\Teich(\bar{S})$ the Teichmüller space of the surface with the opposite orientation. 

Our work concerns another embedding of the Teichmüller space in a very large vector space, called the space of \emph{geodesic currents} and denoted by $\cC(S)$. 
This embedding, say $L : \Teich(S)\tv \cC(S)$, has been introduced by F. Bonahon in \cite{bonahon1988geometry}, and this article aims to understand the linear properties of this embedding. We prove that this embedding is as linearly independent as possible, namely:
\begin{theorem}\label{th-main}
$\{L(S)\, |\, S \in \Teich(S)\}\subset \cC(S)$ is  a linearly independent set of vectors. 
\end{theorem}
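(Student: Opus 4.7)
My plan is to pair the putative relation against closed geodesics, convert it into a multiplicative identity on eigenvalues of the holonomies, and then use the algebraic structure of the joint holonomy representation to rule it out. Assume for contradiction $\sum_{i=1}^n c_i L(X_i)=0$ with pairwise distinct $X_i\in\Teich(S)$ and $(c_i)\ne 0$. Each closed geodesic $\gamma$ embeds in $\cC(S)$ as an atomic current with $i(L(X),\gamma)=\ell_X(\gamma)$ by Bonahon's formula, so pairing yields
$$\sum_{i=1}^n c_i\,\ell_{X_i}(\gamma)=0 \qquad \text{for every }\gamma\in\pi_1(S).$$
Writing $\rho_i:\pi_1(S)\to\PSL_2(\R)$ for the holonomy of $X_i$ and $\lambda_i(\gamma)>1$ for the larger eigenvalue of a lift of $\rho_i(\gamma)$ to $\SL_2(\R)$, this identity becomes the multiplicative relation $\prod_i\lambda_i(\gamma)^{c_i}=1$ on every hyperbolic conjugacy class.

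The algebraic heart of the argument is to show that the diagonal representation $\tilde\rho=(\rho_1,\ldots,\rho_n):\pi_1(S)\to G^n$, with $G=\PSL_2(\R)$, has Zariski-dense image. Each factor $\rho_i$ has Zariski-dense image (being a discrete faithful non-elementary surface-group representation), so the Zariski closure $H\subseteq G^n$ of $\tilde\rho(\pi_1(S))$ surjects onto every factor. Goursat's lemma in the algebraic setting then gives that either $H=G^n$ or some pair of projections $\rho_i,\rho_j$ with $i\ne j$ is related by an algebraic automorphism $\varphi$ of $G$; since every algebraic automorphism of $\PSL_2(\R)$ is inner, the latter case forces $\rho_j$ to be conjugate to $\rho_i$ and hence $X_i=X_j$ in $\Teich(S)$, contradicting pairwise distinctness.

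It remains to refute the multiplicative identity using Zariski density of $\tilde\rho(\pi_1(S))$ in $G^n$. The main obstacle is that $\prod_i\lambda_i^{c_i}=1$ is a \emph{transcendental} identity, whereas Zariski density only controls polynomial identities. The bridge I would use is to combine $\prod_i\lambda_i^{c_i}=1$ with its ``Galois conjugate'' $\prod_i\lambda_i^{-c_i}=1$ (from the eigenvalue involution $\lambda_i\leftrightarrow\lambda_i^{-1}$, i.e.\ swapping the two roots of the characteristic polynomial): when the $c_i$ are rationally commensurable, clearing denominators produces a genuine polynomial identity in the traces $\mathrm{tr}(\rho_i(\gamma))$, which Zariski density forces to hold on all of $G^n$ --- an absurdity, since the $n$ trace coordinates vary independently. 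The general real case appears to require a $\bQ$-linear reduction inside the $\bQ$-span of $\{c_1,\ldots,c_n\}$, and making this last reduction precise will be the main technical obstacle.
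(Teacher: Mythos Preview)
Your setup---pairing a putative relation against closed curves and reducing to $\sum_i c_i\,\ell_{X_i}(\gamma)=0$, then showing that the diagonal holonomy $\tilde\rho:\pi_1(S)\to\PSL_2(\R)^n$ is Zariski dense via Goursat---is exactly the paper's approach. One small correction in that part: it is not true that every algebraic automorphism of $\PSL_2(\R)$ is inner. There is an outer automorphism (conjugation by an element of $\PGL_2(\R)\setminus\PSL_2(\R)$), and under it a holonomy is sent to the holonomy of the \emph{orientation-reversed} surface. The correct conclusion of the Goursat step is that $\rho_j=\varphi\circ\rho_i$ for some automorphism $\varphi$; you then rule out the outer case by observing that all $X_i$ lie in $\Teich(S)$ for the \emph{same} orientation, so $\varphi$ must be inner and $X_i=X_j$.

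The real gap is in the final step. You yourself flag that $\prod_i\lambda_i(\gamma)^{c_i}=1$ is transcendental when the $c_i$ are not rationally commensurable, and your proposed $\bQ$-linear reduction does not work: writing $c_i=\sum_j q_{ij}b_j$ for a $\bQ$-basis $(b_j)$ of $\mathrm{span}_\bQ\{c_i\}$ gives $\sum_j b_j\bigl(\sum_i q_{ij}\ell_{X_i}(\gamma)\bigr)=0$, but the inner sums are \emph{real} numbers, not rational, so $\bQ$-independence of the $b_j$ says nothing. The paper sidesteps this entirely by invoking Benoist's theorem: for a Zariski-dense subgroup of $\PSL_2(\R)^n$, the limit cone $\overline{\bigcup_\gamma \R^+\cdot(\ell_{X_1}(\gamma),\ldots,\ell_{X_n}(\gamma))}\subset(\R^+)^n$ has nonempty interior. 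Since your relation forces this cone into the hyperplane $\{\sum_i c_i x_i=0\}$, all $c_i$ vanish. This is precisely the missing bridge between Zariski density and an arbitrary \emph{real} linear constraint on Jordan projections; without Benoist (or an equivalent statement), your argument is incomplete.
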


The proof relies on algebraic arguments and decomposes into two parts. In Section \ref{sec-zariski}, we show that $n$ different surfaces induce via the direct sum representations of the holonomies, a Zariski dense 
subgroup of $\PSL_2(\R)^n$ and  in Section \ref{sec-independance} we use a famous theorem of Benoist to show the total linear independence of $L$. 

This proof can surely be extended to the embedding of the Hitchin component as presented by Martone--Zhang \cite{martone2019positive}, as soon as we know the different possible Zariski closures for these representations (it seems to be known and follows from a unpublished work of Guichard) 

However for non symmetric negatively curved manifolds this algebraic argument does not hold. P. Haissinsky suggested to us a nice ergodic argument that solves the surfaces case.  Let $M_{<0}(S)$ the space of negatively curved metrics on $S$ up to isotopy. In a similar fashion, one can associate to a metric $m$ its Liouville's current $L(m)$. We get the following theorem: 
\begin{theorem}\label{th-main neg curv}
$\{L(m)\, |\, (S,m) \in M_{<0}(S)\}\subset \cC(S)$ is  a linearly independent set of vectors. 
\end{theorem}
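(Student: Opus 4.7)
The plan is to establish pairwise mutual singularity of the Liouville currents associated to distinct metrics, from which linear independence follows at once. The ergodic input is the ergodicity of the geodesic flow for each negatively curved metric; the rigidity input is the marked length spectrum theorem of Otal--Croke. All the currents in question live in a common space $\cC(S)$: for any two negatively curved metrics on $S$, the visual boundary of $\tilde{S}$ is canonically identified by a Hölder homeomorphism, so we may regard $\cC(S)$ as the space of $\pi_1(S)$-invariant Radon measures on $(\partial \tilde{S} \times \partial \tilde{S} \setminus \Delta)/\Z_2$.

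By the Hopf--Anosov theorem, the geodesic flow on $T^1(S,m)$ is ergodic with respect to the Liouville measure for every $m \in M_{<0}(S)$; equivalently, $L(m)$ is an ergodic $\pi_1(S)$-invariant Radon measure on the space of geodesics of $\tilde{S}$. A standard Lebesgue-decomposition argument then shows that any two ergodic $\pi_1(S)$-invariant Radon measures are either proportional or mutually singular. Suppose now that $L(m_1)$ and $L(m_2)$ are proportional; then the length functions $\gamma \mapsto i(L(m_i), \delta_\gamma) = \ell_{m_i}(\gamma)$ are also proportional, and the marked length spectrum rigidity theorem of Otal--Croke forces $m_1$ and $m_2$ to be isotopic up to a global rescaling. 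Modulo this expected ambiguity, the currents associated to pairwise distinct classes in $M_{<0}(S)$ are pairwise non-proportional, hence pairwise mutually singular.

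Given pairwise mutual singularity of $L(m_1), \ldots, L(m_n)$, choose pairwise disjoint measurable subsets $A_i$ with $L(m_i)(A_i^c) = 0$. Evaluating an alleged linear relation $\sum_i \lambda_i L(m_i) = 0$ on each $A_j$ gives $\lambda_j L(m_j)(A_j) = 0$, and since $L(m_j)$ is a nonzero measure this forces $\lambda_j = 0$.

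The main obstacle is the non-proportionality step: unlike in the Teichmüller setting of Theorem~\ref{th-main}, there is no algebraic structure such as Zariski density to exploit, and the reduction from distinctness in $M_{<0}(S)$ to non-proportionality of the Liouville currents rests on the deep marked length spectrum rigidity of Otal--Croke for negatively curved surfaces.
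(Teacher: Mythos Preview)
Your argument is correct and follows the same route as the paper: ergodicity of the Liouville measure (Hopf--Anosov), the dichotomy that distinct ergodic invariant measures are mutually singular, Otal's rigidity to exclude proportionality, and evaluation of a putative linear relation on the singular supports. The only cosmetic differences are that the paper works on $T^1S$ via Birkhoff generic points rather than on the space of geodesics via the Lebesgue-decomposition dichotomy, and that you are more explicit about the scaling ambiguity $L(c^2m)=c\,L(m)$, which the paper silently removes by restricting to volume~$1$ in Section~4.
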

Except from the ergodic argument, this non algebraic proof relies on the marked length spectrum rigidity Theorem of Otal. More precisely, it uses the fact that the map $m\mapsto L(m)$ is injective. This is only known in dimension $2$ and the argument cannot be extended to any dimension.

\paragraph{Remark} Even if  Theorem  \ref{th-main neg curv} is more general than Theorem \ref{th-main}, we feel that the two approaches have their own interests and that is why we included the purely algebraic proof. For example, the side result on the Zariski closure of direct sum representations in the $\PSL_2(\R)^n$ can certainly be used and generalized in the study of discrete groups in higher rank symmetric spaces.

\paragraph{Organisation of the paper}
In the next section, we present the space of geodesic currents,  Bonahon's embedding of the Teichmüller space and the intersection function. Then we study the Zariski closure of the direct sum representations of holonomies for $n$ Teichmüller representations and prove Theorem \ref{th-main}. 
Finally, in the last section we prove Theorem \ref{th-main neg curv}. 

\paragraph{Acknowledgements}
This project received funding from the European Research Council (ERC) under the European Union's Horizon 2020 research and innovation programme (ERC starting grant DiGGeS, grant agreement No 715982. 

I am very grateful to Peter Haissinsky for its interest and its contribution to Theorem \ref{th-main neg curv} and I also want to thank Fanny Kassel and Jean-Philippe Burelle for fruitful discussions.

\section{Geodesic currents}

Recall that $S$ is a closed connected oriented surface of genus $g\geq 2$. We denote by $\G:=\pi_1(S)$ its fundamental group and $\tilde{S}$ the universal cover of $S$. 
The group $\G$ is a hyperbolic group and we denote by $\partial \G$ its Gromov boundary. If $S$ is endowed with a negatively curved metric, then one can identify the boundary of $\tilde{S}$ with $\bS^1$, homeomorphic to the boundary of $\G$. In this case, the geodesics of $\tilde{S}$ identify, via their endpoints to $\partial^{(2)} \bS^1 := \left(\partial \bS^1  \times \partial \bS^1\setminus \Delta\right)/ \Z/2\Z$, where $\Delta$ is the diagonal of the product and the action of $\Z/2\Z$ is given by the exchange of the two factors. 

\begin{definition}
The space of geodesic currents is the set of  $\G$ invariant measures  on $\partial^{(2)}\G \simeq\partial^{(2)}\bS^1 $. It will be denoted by $\cC(S)$. 
\end{definition}

This does not depend on the  identification of $\partial \G$ with $\partial \bS^1 $.

%
%
%
For the purpose of this article, we will not need a precise description of geodesic currents and will instead focus on two major examples. 


For the first example it will be more convenient to endow $S$ with a negatively curved metric. Then 
for any closed curve $c$ on $S$, there is a unique geodesic representative.  It lifts to a $\G$ invariant subset of geodesic on $\tilde{S}$, that we see as a subset of $\partial^{(2)} \bS^1$. The Dirac measure on each of this lift give a $\G$ invariant measure, that is a geodesic current. In the same manner, we can also see a (positive) linear combination of different closed curve as a geodesic current. 
We will follow the usual slight abuse of notations, and will not make the difference between the curve $c$  and its associated current. 

\begin{theorem}\cite{bonahon1988geometry}\label{th-density}
The set of linear combinations of closed curved is dense in  $\cC(S)$. 
\end{theorem}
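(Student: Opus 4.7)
The plan is to fix an auxiliary hyperbolic structure on $S$ so that $\partial\G$ identifies with $\bS^1$ and $\G$ acts on $\partial^{(2)}\bS^1$ properly discontinuously, then to approximate an arbitrary current $\mu\in\cC(S)$ in the weak-$*$ topology by finite positive combinations of currents associated to closed geodesics. The two key ingredients are the existence of a relatively compact fundamental domain for the $\G$-action, and the density in $\partial^{(2)}\bS^1$ of pairs of fixed points of hyperbolic elements of $\G$.

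First I choose a relatively compact Borel fundamental domain $D\subset\partial^{(2)}\bS^1$ for the $\G$-action such that $\mu(\partial D)=0$. This is possible because $\G\backslash\partial^{(2)}\bS^1$ corresponds, modulo the geodesic flow, to the unit tangent bundle of a compact hyperbolic surface, so its transverse structure admits relatively compact sections. The restriction $\mu|_{\overline{D}}$ is then a finite Radon measure on a compact space, so a standard argument weakly approximates it by $\sum_{i=1}^N c_i\,\delta_{y_i}$, where $\{A_i\}$ is a fine Borel partition of $\overline{D}$, $c_i=\mu(A_i)\geq 0$, and $y_i\in A_i$.

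Next I invoke the density in $\partial^{(2)}\bS^1$ of the pairs $(\g^-,\g^+)$ of attracting/repelling fixed points of hyperbolic elements $\g\in\G$, a standard consequence of $\G$ being a cocompact Fuchsian group with full limit set. For each $i$, choose a primitive hyperbolic $\g_i\in\G$ whose fixed-point pair $x_i:=(\g_i^-,\g_i^+)$ lies in $A_i$, refining the partition beforehand so that distinct $x_i$ lie in distinct $\G$-orbits and each $\G$-orbit of an $x_i$ meets $\overline{D}$ at exactly one point. The current associated to the closed geodesic of $\g_i$ is $c_{\g_i}:=\sum_{g\in\G/\langle\g_i\rangle}\delta_{g\cdot x_i}$ and by construction restricts to $\delta_{x_i}$ on $D$. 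Setting $\mu_\varepsilon:=\sum_i c_i\,c_{\g_i}$, I obtain a positive linear combination of closed-curve currents whose restriction to $D$ is $\sum_i c_i\,\delta_{x_i}$, which approximates $\mu|_D$ weakly thanks to the proximity of each $x_i$ to $y_i$ and the uniform continuity of test functions on $\overline{D}$.

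For the final step, any $f\in C_c(\partial^{(2)}\bS^1)$ has support meeting only finitely many $\G$-translates of $D$, and the $\G$-invariance of both $\mu$ and $\mu_\varepsilon$ reduces the estimate $\bigl|\int f\,d\mu-\int f\,d\mu_\varepsilon\bigr|$ to the approximation already established on $D$, yielding $\mu_\varepsilon\to\mu$ weakly as the partition is refined. The main obstacle is the equivariant bookkeeping in the third step: one must ensure that distinct representatives $x_i$ lie in distinct $\G$-orbits, that the cyclic stabilizer $\langle\g_i\rangle$ contributes exactly one mass to $D$, and that no mass is lost on $\partial D$. These points, handled by careful choice of the fundamental domain and successive refinement of the partition, are the technical heart of the argument; the rest is soft measure theory combined with the density of hyperbolic fixed-point pairs.
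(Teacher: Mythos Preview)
The paper does not give a proof of this statement; it is quoted from Bonahon \cite{bonahon1988geometry} and used without argument, so there is nothing in the paper itself to compare your sketch against beyond the citation.

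Your outline contains a genuine error at its foundation: the $\G$-action on $\partial^{(2)}\bS^1$ is \emph{not} properly discontinuous, and no Borel fundamental domain exists. Any hyperbolic $\g\in\G$ fixes its axis $(\g^-,\g^+)\in\partial^{(2)}\bS^1$, and a compact neighbourhood of that axis meets infinitely many of its own $\langle\g\rangle$-translates; more decisively, if a Borel set $D$ met every $\G$-orbit exactly once, then the axis of $\g$ would force some nontrivial conjugate of $\g$ to send $D$ to itself, hence to fix $D$ pointwise, which is impossible for a hyperbolic isometry. This breaks both your first step (no such $D$) and your last step: a compactly supported test function whose support contains an axis meets infinitely many $\G$-translates of any relatively compact set, so the reduction of $\int f\,d\mu$ to an integral over $D$ cannot go through. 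The repair is essentially Bonahon's own argument: pass to $T^1\tilde{S}\cong\partial^{(2)}\bS^1\times\R$, where $\G$ \emph{does} act properly discontinuously and cocompactly, and perform the Dirac-mass approximation on finitely many compact transversals to the geodesic flow (flow boxes) in $T^1S$. On such a transversal the current restricts to a finite measure, closed geodesics contribute honest Dirac masses, and the bookkeeping you describe in your third paragraph (stabilisers, single intersections, boundary effects) becomes legitimate because the ambient action is now proper.
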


The second major example is the Liouville current.  
The Liouville measure $L$ is  a  $\PSL_2(\R)$ invariant measure on $\partial^{(2)} \bS^1$, absolutely continuous to the Lebesgue measure $d\alpha d\beta$,  given by 
$$L = \frac{d\alpha d\beta}{|e^{i\alpha} - e^{i\beta} |^2}.$$

Now let $(S,m)$ be the surface $S$ endowed with a hyperbolic metric $m$. This gives an isometric diffeomorphism 
$\phi_m : \tilde{S} \tv \Hyp^2$, well defined up to composition by  an isometry of $\Hyp^2$. 
This defines a boundary homeomorphism  $ \partial \phi_m : \partial \tilde{S} \tv \partial  \Hyp^2$, well defined up composition by an element of $\PSL_2(\R)$. Since the Liouville measure on $\Hyp^2$ is invariant by $\PSL_2(\R)$ we can pull back $L$ on $\partial^{(2)}\tilde{S}\simeq \partial^{(2)}\bS^1$.  We denote it by $L_m$. 
We have the following: 
\begin{theorem}\cite{bonahon1988geometry}
The map $L: \Teich(S) \tv \cC(S)$ that sends $(S,m)$ to  $L_m$  is a proper, continuous, embedding. 
\end{theorem}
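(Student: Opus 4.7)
The plan is to establish the three properties — continuity, injectivity, and properness — separately. All three hinge on Bonahon's continuous symmetric \emph{intersection form} $i:\cC(S)\times\cC(S)\to\bR_{\geq 0}$, which extends the geometric intersection number of closed curves and, crucially, satisfies the Crofton-type identity $i(L_m,c)=\ell_m(c)$ for every hyperbolic metric $m$ and every closed curve $c$. I would introduce $i$ early (and sketch the Crofton identity on $\Hyp^2$), since it bridges the purely measure-theoretic object $L_m$ with the geometry of $(S,m)$.

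For continuity, I would first note that the equivariant developing map $\phi_m:\tilde S\to\Hyp^2$ can be chosen to depend continuously on $m\in\Teich(S)$ on compact sets. Its boundary extension $\partial\phi_m:\bS^1\to\bS^1$ then varies continuously in the $C^0$ topology, and since the Liouville measure on $\partial^{(2)}\Hyp^2$ is Radon and absolutely continuous with a continuous density with respect to Lebesgue, its pullback $L_m$ varies continuously in the weak-$*$ topology of $\cC(S)$. Injectivity is then an immediate consequence of the intersection identity: if $L_m=L_{m'}$, then $\ell_m(c)=i(L_m,c)=i(L_{m'},c)=\ell_{m'}(c)$ for every closed geodesic $c$, and the classical length spectrum rigidity for hyperbolic surfaces (Fricke, a precursor of Otal's theorem quoted later in the paper) forces $m=m'$ in $\Teich(S)$.

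For properness, suppose $L_{m_n}\to\mu$ in $\cC(S)$. By continuity of $i$, the lengths $\ell_{m_n}(c)=i(L_{m_n},c)\to i(\mu,c)$ are bounded above for every fixed closed curve $c$. Fix a system of $9g-9$ simple closed curves $c_1,\ldots,c_{9g-9}$ whose lengths give global coordinates on $\Teich(S)$, and such that each $c_i$ intersects some $c_j$. If some $\ell_{m_n}(c_i)\to 0$, the collar lemma forces $\ell_{m_n}(c_j)\to\infty$ for every $c_j$ that meets $c_i$, contradicting the upper bound just obtained. Hence all the $\ell_{m_n}(c_i)$ are uniformly bounded above and below, so $\{m_n\}$ stays in a compact region of $\Teich(S)$, proving properness.

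The main obstacle, and the conceptual heart of the theorem, is the initial construction of the intersection form together with the identity $i(L_m,c)=\ell_m(c)$: everything else reduces to rather soft arguments once this is in hand. Establishing this identity requires unwinding that $L$ on $\partial^{(2)}\Hyp^2$ is a Möbius-invariant measure normalized so that the $L$-mass of the set of geodesics crossing a hyperbolic segment equals twice its length — a computation one must do by hand with the explicit density $d\alpha\,d\beta/|e^{i\alpha}-e^{i\beta}|^2$.
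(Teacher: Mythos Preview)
The paper does not supply its own proof of this theorem; it is quoted from Bonahon's original article \cite{bonahon1988geometry} and stated without argument (the identity $i(L_m,c)=\ell_m(c)$ is likewise recorded as a separate cited theorem). There is therefore nothing in the present paper to compare your proposal against.

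That said, your outline is correct and is essentially Bonahon's own strategy: continuity via continuous dependence of the boundary map $\partial\phi_m$ on $m$, injectivity from $i(L_m,c)=\ell_m(c)$ combined with the classical fact that a hyperbolic surface is determined by its marked length spectrum, and properness from the collar lemma applied to a filling system of simple closed curves whose lengths give coordinates on $\Teich(S)$. One small caution: for properness you need the curve system to be \emph{filling} (so that every curve is crossed by some other), which is indeed the case for the standard $9g-9$ system but is worth stating explicitly; and the ``global coordinates'' claim is really the $9g-9$ theorem, which you are implicitly invoking.
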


For any negatively curved metric $(S,m) \in M_{<0} (S)$ one can also pull back the Liouville measure and Otal showed:
\begin{theorem}\cite{otal1990spectre}\label{Th-otal}
The map $L: M_{<0}(S) \tv \cC(S)$ that sends $(S,m)$ to  $L_m$  is injective.
\end{theorem}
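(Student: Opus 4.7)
The plan is to reduce injectivity of $L$ to Otal's marked length spectrum rigidity. First I would establish the extension of the geometric intersection number of closed curves to a continuous bilinear pairing $i : \cC(S) \times \cC(S) \to \R_+$, together with the key identity
\[
i(L_m, c) = \ell_m(c)
\]
for every closed curve $c$, where $\ell_m(c)$ is the length of its $m$-geodesic representative. This identity comes from a Santaló--Crofton type formula expressing $m$-lengths of geodesic arcs as $L_m$-measures of the set of geodesics crossing them, applied to a fundamental domain of a lift of $c$. Granting it, the hypothesis $L_m = L_{m'}$ immediately forces $\ell_m(c) = \ell_{m'}(c)$ for every closed curve, so $m$ and $m'$ have the same marked length spectrum.

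The substantive step is then the marked length spectrum rigidity itself. The strategy is to use the identifications $\partial \tilde S \simeq \partial \G \simeq \bS^1$ coming from the two metrics to produce a $\G$-equivariant boundary homeomorphism $f : \partial \tilde S \to \partial \tilde S$, and to prove that $f$ preserves the natural cross-ratio on $\bS^1$ induced by the boundary extensions of each metric. Such a boundary map is then the boundary of an actual $\G$-equivariant isometry $\tilde\phi : (\tilde S, m) \to (\tilde S, m')$ between universal covers, which descends to an isometry $(S,m) \to (S,m')$ isotopic to the identity, i.e.\ $m = m'$ in $M_{<0}(S)$.

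The hard and dimension-two step is showing that $f$ preserves cross-ratios. Following Otal, I would express the translation length of each $\g \in \G$ as an integral of cross-ratios over the $\G$-invariant measure on the two-dimensional space of geodesics meeting the axis of $\g$; equality of translation lengths for all $\g$ then forces equality of cross-ratios on the dense set of quadruples of fixed points of hyperbolic elements, and by continuity on all of $\partial^{(2)}\bS^1$. This Crofton-type argument relies crucially on the space of geodesics being two-dimensional: in higher dimensions the space of geodesics has dimension $2(n-1)$, and a single cross-ratio of four boundary points no longer parameterizes enough geometric data to reconstruct the metric. This is precisely why, as the author notes, the argument cannot be extended beyond surfaces.
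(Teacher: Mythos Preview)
The paper does not actually prove this statement: it is quoted as Otal's theorem and used as a black box in Section~4. So there is no proof in the paper to compare yours against.

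Your reduction is the correct one: the identity $i(L_m,c)=\ell_m(c)$ (which the paper records just below this theorem) shows that $L_m=L_{m'}$ forces the marked length spectra to coincide, and then Otal's marked length spectrum rigidity finishes. Your sketch of Otal's own argument is in the right direction --- the heart of the matter is indeed that the $\G$-equivariant boundary conjugacy preserves the cross-ratio, equivalently pushes one Liouville current to the other --- but the precise mechanism you describe (expressing translation lengths as integrals of cross-ratios over geodesics meeting the axis, then passing to a dense set of fixed-point quadruples) is not how Otal actually argues. He instead studies the angle function between corresponding geodesics under the orbit equivalence of the two geodesic flows and uses an integral identity together with a strict convexity argument to force equality of the Liouville currents; from there the isometry follows. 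Your remark about the dimension-two restriction is accurate and matches the paper's own comment.
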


Bonahon also introduced a bilinear form on $\cC(S)$, the so called intersection function, denoted by $i(\cdot, \cdot) : \cC(S)\times \cC(S) \tv \R$. It generalizes the notion of geometric intersection between two closed curves. Although the definition is a bit technical, the only property we will need will be:
\begin{theorem}
Let $L_m$ be a Liouville current associated to $(S,m)$, let $c$ be a closed curve on $S$. Then 
$$i(L_m,c) =\ell_m(c),$$
where  $\ell_m(c)$ is the length on $(S,m)$ of the unique geodesic representative of $c$. 
\end{theorem}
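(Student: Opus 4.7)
The proof is the classical Crofton-type computation at the heart of Bonahon's construction. My plan is to unwind the definition of $i(\cdot,\cdot)$, reduce to a computation on a single lift of $c$ in the universal cover $\tilde S \simeq \Hyp^2$, and close with Crofton's formula on $\Hyp^2$.

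First I would fix a lift $\tilde c_0$ of $c$ in $\tilde S$ and the primitive hyperbolic element $\g \in \G$ whose translation axis is $\tilde c_0$. The current associated with $c$ is the sum of unit Dirac masses over the $\G$-orbit of $\tilde c_0$ in $\partial^{(2)}\tilde S$. Unwinding Bonahon's definition of the intersection pairing as the total mass of the restriction of $\mu\otimes\nu$ to the subset $DG \subset \partial^{(2)}\tilde S \times \partial^{(2)}\tilde S$ of transversely intersecting pairs of geodesics, quotiented by the diagonal $\G$-action, the $\G$-quotient collapses on the $c$-factor to the $\langle\g\rangle$-quotient of the fiber over $\tilde c_0$. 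Hence
$$i(L_m,c) \;=\; L_m\!\left(\,\{\text{geodesics of }\tilde S \text{ transverse to } \tilde c_0\}\,/\,\langle\g\rangle\right).$$

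The remaining task is the Crofton formula on $\Hyp^2$: the pushforward of the Liouville measure on $\partial^{(2)}\Hyp^2$ under the intersection-point map onto any geodesic equals hyperbolic arc-length. I would verify this directly in the upper half-plane, taking the test geodesic to be the imaginary axis: a boundary pair $(\alpha,\beta)$ with $\alpha\beta<0$ yields a semi-circle meeting the axis at height $y=\sqrt{-\alpha\beta}$, and a change of variables in the density $\frac{d\alpha\,d\beta}{(\alpha-\beta)^2}$ produces $dy/y=d\ell$. Applied to $\tilde c_0$, on which $\g$ acts by translation of length $\ell_m(c)$, the right-hand side of the displayed equation becomes the arc-length of a fundamental interval for $\langle\g\rangle$ on $\tilde c_0$, which is exactly $\ell_m(c)$.

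The main obstacle is not substantive but conventional: one must check that the reduction from the $\G$-quotient used to define $i$ to the $\langle\g\rangle$-quotient on the fiber over $\tilde c_0$ is well-posed, and that with the normalization $L=\frac{d\alpha\,d\beta}{|e^{i\alpha}-e^{i\beta}|^2}$ fixed in the paper the Crofton constant is exactly $1$ (and not, say, $2$ or $\pi$). Once these conventions are pinned down, the rest of the argument is a direct unwinding of Bonahon's definitions.
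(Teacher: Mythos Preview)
The paper does not actually prove this theorem: it is stated without proof as a known property of Bonahon's intersection form, with the implicit reference to \cite{bonahon1988geometry}. So there is no ``paper's own proof'' to compare against.

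Your outline is the standard Crofton argument and is essentially Bonahon's original proof. The reduction of $i(L_m,c)$ to the $L_m$-measure of geodesics crossing a fundamental segment on a fixed lift $\tilde c_0$ is correct, and the upper-half-plane computation you indicate (intersection height $y=\sqrt{-\alpha\beta}$, pushforward of $\frac{d\alpha\,d\beta}{(\alpha-\beta)^2}$ equals $dy/y$) is the right one. Your only caveat---that the normalization of $L$ as written in the paper must be checked to give Crofton constant exactly $1$---is well placed: depending on whether one works with ordered or unordered pairs and on the precise density chosen, a factor of $2$ can appear, and one should verify that the convention $L=\frac{d\alpha\,d\beta}{|e^{i\alpha}-e^{i\beta}|^2}$ on unordered pairs matches the half-plane density $\frac{d\alpha\,d\beta}{(\alpha-\beta)^2}$ you use. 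Modulo that bookkeeping, the argument is complete.
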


\section{The embedding of the Teichmüller space}
\subsection{Zariski closure of diagonal representation}\label{sec-zariski}
We will need the following result, known as Goursat's Lemma : 

\begin{theorem}[Goursat's Lemma]
Let $G,G'$ be two groups. Let $ p_1 :  H \tv G $, $p_2  : H \tv G'$ be two surjective homomorphisms. Identify  $N'$ the kernel of $p_1$ with a normal subgroup of $G'$ and $N$, the kernel of $p_2$ with a normal subgroup of $G$. Then the image of $H$ in $G/N\times G'/N'$ is the graph of an isomorphism 
$G/N\simeq G'/N'$. 
\end{theorem}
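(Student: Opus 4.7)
The plan is to build the isomorphism $G/N \simeq G'/N'$ explicitly out of $H$, taking the identifications to be $N = p_1(\ker p_2) \lhd G$ and $N' = p_2(\ker p_1) \lhd G'$. I would introduce the combined morphism $\Phi \colon H \tv G/N \times G'/N'$ defined by $\Phi(h) = (p_1(h)\,N,\, p_2(h)\,N')$ and show that its image is the graph of an isomorphism $\phi \colon G/N \tv G'/N'$.

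The only step with real content is to verify that $\Phi(H)$ is the graph of a \emph{function}, i.e.\ that $p_1(h) \in N$ forces $p_2(h) \in N'$. By definition of $N$, I can pick $k \in \ker p_2$ with $p_1(k) = p_1(h)$. Then $hk^{-1} \in \ker p_1$, so that $p_2(h) = p_2(hk^{-1}) \cdot p_2(k) = p_2(hk^{-1}) \in p_2(\ker p_1) = N'$. I expect this cross-kernel chase to be the genuine content of the lemma.

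Once $\phi$ is well defined, the remaining verifications are formal. It is a group homomorphism because $p_1$ and $p_2$ are. Surjectivity is immediate from surjectivity of $p_2$: any class $g'N'$ is of the form $p_2(h)N'$, and then $\phi(p_1(h)\,N) = g'N'$. Injectivity is symmetric to the well-definedness argument with the roles of $(p_1,N)$ and $(p_2,N')$ swapped: if $p_2(h) \in N' = p_2(\ker p_1)$, pick $k' \in \ker p_1$ with $p_2(k') = p_2(h)$; then $hk'^{-1} \in \ker p_2$ and therefore $p_1(h) = p_1(hk'^{-1}) \in p_1(\ker p_2) = N$.

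The only mild obstacle is the bookkeeping between the kernels (which live in $H$) and their images $N,\,N'$ (which are normal subgroups of $G,\,G'$ respectively); the normality of these images follows from the surjectivity of the two projections. After those identifications are set up consistently, the whole proof is essentially one symmetric diagram chase.
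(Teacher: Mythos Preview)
Your argument is correct and is the standard proof of Goursat's Lemma: the cross-kernel chase you highlight is indeed the only substantive step, and your verification of well-definedness, injectivity, surjectivity, and normality of $N$ and $N'$ is complete. Note, however, that the paper does not actually prove this statement; it merely records Goursat's Lemma as a classical fact for use in the proof of Theorem~\ref{Th-Zariski density}, so there is no ``paper's own proof'' to compare against.
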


Recall that to a hyperbolic surface $(S,m)$, the holonomy map gives a discrete and faithful representation  $\rho_m \, :\, \G \tv \PSL_2(\R)$, well defined up to conjugacy by $\PSL_2(\R)$.
The outer automorphism $\tau$ of $\PSL_2(\R)$ is given by the conjugation with $\left(\begin{array}{cc} 1 & 0\\
0 & -1
\end{array}\right)$
and gives the corresponding hyperbolic structure on the surface with the opposite orientation. 

Finally recall that any discrete and faithful representation of a compact surface fundamental group into $PSL_2(\R)$ is Zariski dense. 

\begin{theorem}\label{Th-Zariski density}
Let $S$ be a closed connected oriented surface of genus $g\geq 2$. Let $X_1, ... , X_n$ be $n$  hyperbolic surfaces and let $\rho_i$ be the corresponding holonomies. Let  $\rho=(\rho_1, ..., \rho_n) :\G \tv \PSL_2(\R)^n$ be the direct sum representation. Then $\rho(\G)$ is  Zariski dense if and only if all surfaces are different in $\Teich(S)$. 
\end{theorem}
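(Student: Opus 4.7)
The plan is to argue by induction on $n$, using Goursat's Lemma at each step together with the structural fact that the only Zariski--closed normal subgroups of $\PSL_2(\R)^{n-1}$ are the partial products.

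First I would handle the easy direction. If $X_i$ and $X_j$ coincide in $\Teich(S)$ for some $i\neq j$, then $\rho_j = h\rho_i h^{-1}$ for a single element $h\in\PSL_2(\R)$, and therefore $\rho(\G)$ sits inside the proper Zariski--closed subgroup
$$\{(g_1,\dots,g_n)\in\PSL_2(\R)^n\mid g_j=hg_ih^{-1}\},$$
so it is not Zariski dense. For the converse, the base case $n=1$ is exactly the stated fact that a discrete faithful representation of $\G$ into $\PSL_2(\R)$ has Zariski dense image. For the inductive step, assume the result for $n-1$, let $H$ be the Zariski closure of the image of $(\rho_1,\dots,\rho_n)$, write $G=\PSL_2(\R)^{n-1}$, $G'=\PSL_2(\R)$, and apply Goursat's Lemma to $H\subset G\times G'$. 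The projection onto $G$ is surjective by the inductive hypothesis (applied to the first $n-1$ pairwise distinct surfaces) and the projection onto $G'$ is surjective by the base case, so the hypotheses of Goursat's Lemma are met and yield normal subgroups $N\triangleleft G$, $N'\triangleleft G'$ together with an algebraic isomorphism $G/N\simeq G'/N'$.

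Next I would classify these data. Because $\mathfrak{sl}_2(\R)$ is simple, the only connected closed normal algebraic subgroups of $G=\PSL_2(\R)^{n-1}$ are the partial products $\prod_{i\in I}\PSL_2(\R)$; since $\PSL_2(\R)$ is centreless, all closed normal subgroups are connected. Similarly $N'$ is either trivial or all of $G'$. If $N'=G'$, then $G/N\simeq\{e\}$, which forces $N=G$, hence $H=G\times G'=\PSL_2(\R)^n$ and we are done. Otherwise $N'=\{e\}$, so $G/N\simeq\PSL_2(\R)$, which forces $N=\prod_{i\neq i_0}\PSL_2(\R)$ for a unique index $i_0\in\{1,\dots,n-1\}$; the isomorphism $\phi:G/N\to G'$ then translates into the pointwise identity $\rho_n(\g)=\phi(\rho_{i_0}(\g))$ for every $\g\in\G$.

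It remains to rule out this second case. Since $\mathbf{PSL}_2$ has no outer automorphisms as an algebraic group over $\bC$, the algebraic automorphism $\phi$ defined over $\R$ is conjugation by some element $h\in\PGL_2(\R)$. If $h\in\PSL_2(\R)$, the equality $\rho_n=h\rho_{i_0}h^{-1}$ identifies $X_n$ with $X_{i_0}$ in $\Teich(S)$, contradicting distinctness. If $h\notin\PSL_2(\R)$, then $\phi$ is conjugation by $h$, which up to an inner $\PSL_2(\R)$--automorphism equals the outer involution $\tau$; hence $\rho_n$ and $\tau\circ\rho_{i_0}$ would be conjugate in $\PSL_2(\R)$, placing $X_n$ and $X_{i_0}$ in opposite components of $Rep$. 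This is impossible since all the $X_j$ live in $\Teich(S)$. The delicate point I expect to be the only real obstacle is precisely this last dichotomy: keeping track of the distinction between $\PSL_2(\R)$--conjugation and $\PGL_2(\R)$--conjugation, and using the fact that all surfaces share the same orientation to exclude the outer--twist possibility.
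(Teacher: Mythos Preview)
Your proof is correct and follows essentially the same route as the paper: induction on $n$, Goursat's Lemma applied to the Zariski closure with respect to the splitting $\PSL_2(\R)^{n-1}\times\PSL_2(\R)$, classification of the closed normal subgroups of $\PSL_2(\R)^{n-1}$ as partial products, and the orientation hypothesis to exclude the outer automorphism $\tau$. The only cosmetic differences are that the paper isolates the normal--subgroup classification as a separate lemma (proved again via Goursat rather than via the Lie algebra and centrelessness), and that your treatment of the $\PSL_2$ versus $\PGL_2$ dichotomy at the end is more explicit than the paper's one--line appeal to orientation.
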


\begin{proof}
If two surfaces,  say $X_1, X_2$, correspond to the same point in the Teichmuller space, then there exists $h\in \PSL_2(\R)$ such that their holonomies satisfy $\rho_1 =h \rho_2 h^{-1} $. Therefore the Zariski closure of $\rho$ is contained in $H \times \PSL_2(\R)^{n-2}$, where $H$ is the diagonal copy of $\PSL_2(\R)$ given by $H=\{(g, hgh^{-1})\, |\, g\in \PSL_2(\R)\}. $  

We will prove the converse by induction. 

For $n=1$ the result is trivial. Suppose that the result is true for $n$ and consider $(n+1)$ non isometric hyperbolic structures on $S$ : $X_1, ..., X_{n+1}$. Let $\rho=(\rho_1, ... , \rho_{n+1})$ be the diagonal representation corresponding to their holonomies. Let $H$ be the Zariski closure of $\rho(\G)$.  Consider the projection on the last factor $\pi_{n+1} : H \tv \{\Id\}^n\times \PSL_2(\R) $ and the one on the first $n$ factors: $p : H \tv \PSL_2(\R)^n \times \{ \Id\}$. 

By induction the projection $p$ is surjective and by hypothesis $\pi_{n+1}$ also is. Since $\PSL_2(\R)$ is simple, the kernel of $p$ can  either be $\PSL_2(\R)$ or $\{\Id\}$. In the first case, by Goursat's lemma, the kernel of $\pi_{n+1}$, $N$, has to verify $\PSL_2(\R)^n/N \simeq \PSL_2(\R)/\PSL_2(\R)$ and therefore $H=\PSL_2(\R)^{n+1}$.

In the second case, the kernel of $\pi_{n+1}$, $N$, has to verify $\PSL_2(\R)^n/N \simeq \PSL_2(\R)$. Since $\PSL_2(\R)$ is simple, we claim that the only possibility  for $N$ is $N=\PSL_2(\R)^{k} \times \{\Id\} \times \PSL_2(\R)^{n-k-1}$ for some $k\in \{0, ..., n-1\}$, see Lemma \ref{lem-simple}. Then by Goursat's Lemma, the projection of $H$ in $\PSL_2(\R)\times \Big(\PSL_2(\R)^n/N\Big) $ identifies with the graph of an isomorphism  from  $\PSL_2(\R)$ to $ \Big(\{ \Id\}^{k}\times \PSL_2(\R)\times \{\Id \} ^{n-k-1}\Big)\simeq \PSL_2(\R)$, and finally implies that $\theta(\rho_1) =\rho_k$ for an  automorphism of $\PSL_2(\R)$. 

Since the surface is  oriented, $\theta$ has to be a inner automorphism, and therefore $\rho_1$ is conjugated to $\rho_k$. This is a contradiction.

\end{proof}

We prove the claim from the second part of the proof
\begin{lemma}\label{lem-simple}
Let $N$ be a normal subgroup of $\PSL_2(\R)^n$, such that $\PSL_2(\R)^n /N\simeq \PSL_2(\R)$. Then there exists $k\in \{0, ..., n-1\}$ such that $N=\PSL_2(\R)^{k} \times \{\Id\} \times \PSL_2(\R)^{n-k-1}$.
\end{lemma}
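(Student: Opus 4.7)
The plan is to classify all normal subgroups of $\PSL_2(\R)^n$ and then extract those whose quotient is isomorphic to $\PSL_2(\R)$. Write $G := \PSL_2(\R)$ and, for $i \in \{1, \dots, n\}$, let $G_i \subset G^n$ denote the coordinate subgroup consisting of tuples that are the identity outside position $i$.

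The key step is the structural claim that every normal subgroup $N \triangleleft G^n$ has the form $N = \prod_{i \in I} G_i$ for some subset $I \subseteq \{1, \dots, n\}$. Let $\pi_i : G^n \tv G$ be the projection onto the $i$-th factor. Since $N$ is normal, $\pi_i(N)$ is a normal subgroup of $G$, and the simplicity of $G$ as an abstract group forces $\pi_i(N) \in \{\{\Id\}, G\}$. Set $I := \{i : \pi_i(N) = G\}$, so automatically $N \subseteq \prod_{i \in I} G_i$. For the reverse inclusion, fix $i \in I$ and, given $g, h \in G$, choose $\nu \in N$ with $\pi_i(\nu) = g$ and let $h_i \in G_i$ be the element whose $i$-th coordinate is $h$. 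Because $G_j$ and $G_i$ commute for $j \neq i$, the commutator $[\nu, h_i]$ has identity in every coordinate except the $i$-th, where it equals $[g, h]$. This commutator lies in $N \cap G_i$, so $N \cap G_i$ contains every element of $G_i$ whose $i$-th coordinate is a commutator. Since $G$ is simple and non-abelian it is perfect, hence $N \cap G_i = G_i$, which completes the classification.

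Applying the classification under the hypothesis yields $G^{n-|I|} \simeq G^n/N \simeq G$. But $G^m$ for $m \geq 2$ admits the proper nontrivial normal subgroup $G \times \{\Id\}^{m-1}$, contradicting the simplicity of $G$, while $G^0$ is trivial. Thus $n - |I| = 1$, and writing $\{k+1\}$ for the unique element of $\{1, \dots, n\} \setminus I$ (with $k \in \{0, \dots, n-1\}$) gives $N = \PSL_2(\R)^k \times \{\Id\} \times \PSL_2(\R)^{n-k-1}$, as required.

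The main obstacle is the structural classification, specifically the inclusion $G_i \subseteq N$ whenever $\pi_i(N) = G$. This step uses both the simplicity of $G$ (to pin down $\pi_i(N)$) and its perfectness (to recover all of $G_i$ from commutators), and it does not generalise to arbitrary direct products. Once this is in hand, identifying the admissible $N$ among the products $\prod_{i \in I} G_i$ is a direct count.
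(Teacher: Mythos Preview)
Your proof is correct. The commutator argument showing $G_i \subseteq N$ whenever $\pi_i(N) = G$ is clean: the element $[\nu, h_i]$ indeed lies in $N$ by normality and in $G_i$ because $h_i$ commutes with all coordinates of $\nu$ except the $i$-th, and perfectness of a non-abelian simple group then gives the full factor. The final count $n - |I| = 1$ via simplicity of $G$ is also fine.

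Your route differs from the paper's in the key technical step. The paper does not classify all normal subgroups of $G^n$ up front; instead it first uses the hypothesis $G^n/N \simeq G$ to show at most one projection $p_i(N)$ is trivial, and then, for each pair $i,j$ with $p_i(N) = p_j(N) = G$, analyses $N_{ij} := N \cap (G_i \times G_j)$ via Goursat's lemma to conclude $N_{ij} = G_i \times G_j$. Your commutator argument bypasses Goursat entirely and yields the stronger intermediate statement that \emph{every} normal subgroup of $G^n$ is a coordinate product $\prod_{i\in I} G_i$; the specific hypothesis on the quotient is only invoked at the very end to pin down $|I| = n-1$. This makes your proof more self-contained and slightly more general, at the cost of relying explicitly on perfectness (which the paper's Goursat approach does not need to name, though simplicity is used throughout). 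Both approaches are standard for products of non-abelian simple groups.
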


\begin{proof}
For the sake of clarity we will denote $G_i$ the $i$-th $\PSL_2(\R)$ factor in $\PSL_2(\R)^n$ and  $G$ will designated any abstract $\PSL_2(\R)$. 
Consider $p_i : G^n \tv G_i$. Then  $p_i(N)$ is a normal subgroup of $G_i$. By simplicity $p_i(N)$ is either trivial or  the whole group $G_i$. Suppose that there exists $i\neq j$ such that $p_i(N) = p_j(N) = \{\Id\}$, and by symmetry with suppose that $(i,j)=(1,2)$ then 
$N \subset \{\Id\} \times \{\Id \} \times G^{n-2}$ and therefore $G^n/N \supset G^2 $ which is absurd. 
Consequently there is at most one projection such that the image of $N$ is trivial. 

Consider two projections $p_i,p_j $  with non trivial image. Denote by $N_{ij} := N \cap (G_i\times G_j)$. 
Remark that $N_{ij}$ is normal in $G_i\times G_j$. Then applying Goursat lemma, one see that either
$N_{ij} =G_i \times G_j$ or $N_{ij}$ is the graph of a homomorphism between $G_i$ and $G_j$. The latter cannot happen since it would not be a normal subgroup. 
Therefore $N_{ij}=G_i\times G_j$. 

Applying this argument for all pairs with non trivial image, we see that $N = \Pi_{k} G_k$ where the product if taken over all $k$ such that $p_k(G)\neq \Id$. Since $G^n/N \simeq G$, there is at least one factor such that $p_k(G) \neq \Id$, and finally exactly one by the previous argument.  
\end{proof}

\paragraph{Remark:} This lemma generalizes to any non-abelian product of simple groups.

\subsection{Total independance of Liouville's current}\label{sec-independance}

We are going to give Benoist's theorem \cite{benoist1997proprietes} in the context of  $\PSL_2(\R)^n$. Any hyperbolic element $g\in \PSL_2(\R)$ is conjugated to an element of the form $\left( \begin{array}{cc}
e^{\lambda(g)/2} & 0\\
0 & e^{-\lambda(g)/2}
\end{array}\right)$. The number $\lambda(g)\in \R^+$ is the translation length of $g$.

Let  $g=(g_1, ..., g_n)\in \PSL_2(\R)^n$ be a loxodromic element, that is all $g_i$ are  hyperbolic elements. The vector $\lambda(g):=(\lambda(g_1),..., \lambda(g_n) ) \in (\R^+)^n$ is called the Jordan projection of $g$

\begin{definition}
Let $H$ be a subgroup of $\PSL_2(\R)^n$.  The limit cone of $H$ is defined by 
$$C(H) :=\overline{\cup_{h\in H_{lox}} \lambda(h) \R^+} \subset(\R^+)^n.$$
\end{definition}

%
%
%
%


One of the striking properties of the limit cone is given by the following theorem of Y. Benoist: 
\begin{theorem}\cite{benoist1997proprietes}\label{Benoist}
 If $H$ is Zariski dense in $\PSL_2(\R)^n$, then its limit cone  has non-empty interior. 
\end{theorem}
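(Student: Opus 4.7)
The plan is to produce $n$ loxodromic elements of $H$ whose Jordan projections are linearly independent in $\R^n$, and then to use an asymptotic additivity of the Jordan projection under products to fill an open positive cone inside $C(H)$.

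First, I would use Zariski density to produce a single loxodromic element $h_1 \in H$. The condition of being hyperbolic in each of the $n$ factors is Zariski-open in $\PSL_2(\R)^n$, and non-empty since each coordinate projection $p_i(H)$ is itself Zariski dense in $\PSL_2(\R)$, hence contains hyperbolic elements. The intersection of $n$ non-empty Zariski-open subsets remains Zariski-open and non-empty, hence meets $H$. I would iterate this argument to build elements $h_1, \ldots, h_n \in H$ such that $\lambda(h_1), \ldots, \lambda(h_n)$ are linearly independent and such that their attracting/repelling fixed points in each factor of $(\partial \Hyp^2)^n$ are pairwise distinct (a generic position condition): at each step, the extra constraints cut out a non-empty Zariski open subset of $\PSL_2(\R)^n$, which meets $H$ by Zariski density.

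The core technical step is an asymptotic formula for the Jordan projection of a product. For loxodromic elements $h_1, \ldots, h_n$ in generic position, a ping-pong / Schottky construction provides, for all sufficiently large exponents $m_1, \ldots, m_n$, a loxodromic product $h_1^{m_1} \cdots h_n^{m_n}$ whose Jordan projection satisfies
$$\lambda(h_1^{m_1} \cdots h_n^{m_n}) = m_1 \lambda(h_1) + \cdots + m_n \lambda(h_n) + O(1),$$
the error being bounded independently of the exponents. This uses the North-South dynamics of a hyperbolic element of $\PSL_2(\R)$ on $\partial \Hyp^2$ in each factor: the product contracts a neighborhood of the attracting fixed points at a rate approximately the sum of the individual contractions, and for rank-one factors the translation length is essentially the log-contraction rate. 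Dividing by $m_1 + \cdots + m_n$ and letting the exponents go to infinity along prescribed rational ratios realises every positive combination $\sum t_i \lambda(h_i)$ with $t_i \geq 0$ as a limit in $C(H)$.

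Since the vectors $\lambda(h_1), \ldots, \lambda(h_n)$ are linearly independent by construction, the cone they positively span has non-empty interior in $\R^n$, which completes the argument. The main obstacle is the asymptotic additivity displayed above; this is the technical heart of Benoist's work. Making it precise requires a careful choice of disjoint attracting/repelling neighborhoods in $(\partial \Hyp^2)^n$ and a uniform comparison between the translation length of a loxodromic element and its expansion rate on an attracting neighborhood, using crucially that each factor has rank one, so the Jordan projection collapses to the vector of translation lengths.
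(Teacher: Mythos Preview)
The paper does not prove this theorem at all: it is quoted from \cite{benoist1997proprietes} and used as a black box. So there is no ``paper's own proof'' to compare against; what you have written is a sketch of Benoist's argument itself.

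Your outline has the right architecture (produce loxodromic elements in generic position, use asymptotic additivity of the Jordan projection to fill a convex cone), but there is a genuine gap in the first step. You claim that at each stage the condition ``$\lambda(h_k)\notin\mathrm{span}(\lambda(h_1),\ldots,\lambda(h_{k-1}))$'' cuts out a non-empty Zariski-open subset of $\PSL_2(\R)^n$, which then meets $H$ by Zariski density. This is not correct: the Jordan projection $\lambda$ is not an algebraic map --- on each factor it is $g\mapsto 2\,\mathrm{arccosh}(|\mathrm{tr}(g)|/2)$ --- so a linear relation $\sum a_i\lambda_i(g)=0$ is in general a transcendental, not a polynomial, condition on the matrix entries (unless the $a_i$ happen to be rational, in which case it becomes a relation among eigenvalues). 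Zariski density of $H$ therefore does not by itself prevent all the $\lambda(h)$ from lying in a fixed hyperplane. Establishing precisely this --- that the Jordan projections of a Zariski-dense subgroup cannot be confined to a proper subspace --- is the actual content of Benoist's theorem, and your sketch assumes it rather than proves it. The generic-position condition on fixed points \emph{is} Zariski-open, and the asymptotic additivity you describe is indeed Benoist's main technical lemma; but the spanning step requires a separate argument (in Benoist's paper, via irreducible representations and the behaviour of highest weights), which is missing here.
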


We are now going to show the main result of this section.

\begin{theorem}
$\{L_m \, |\, (S,m)\in \Teich(S)\}$ is a linearly independent family of geodesic currents. 
\end{theorem}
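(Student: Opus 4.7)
The plan is to argue by contradiction, reducing a hypothetical linear dependence to a statement about the Jordan projections of a direct-sum holonomy representation, and then invoking Theorems \ref{Th-Zariski density} and \ref{Benoist} together. Concretely, suppose there exist pairwise distinct $(S,m_1),\dots,(S,m_n)\in\Teich(S)$ and nonzero scalars $\alpha_1,\dots,\alpha_n$ with
\[
\sum_{i=1}^n \alpha_i L_{m_i} = 0 \in \cC(S).
\]
Pairing this identity with an arbitrary closed curve $c$ on $S$ via the intersection function and using the formula $i(L_{m_i},c)=\ell_{m_i}(c)$ from the preceding theorem, one obtains the scalar identity $\sum_i \alpha_i \ell_{m_i}(c)=0$ for every closed curve $c$.

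The next step is to translate these length constraints into a constraint on Jordan projections. Let $\rho_i:\G\tv \PSL_2(\R)$ be the holonomy of $m_i$ and $\rho=(\rho_1,\dots,\rho_n):\G\tv \PSL_2(\R)^n$ the direct sum. For any nontrivial $\gamma\in\G$ corresponding to a closed curve $c$, one has $\ell_{m_i}(c)=\lambda(\rho_i(\gamma))$, the translation length. Since $\rho_i$ is discrete and faithful, every nontrivial $\gamma$ is hyperbolic under each $\rho_i$, so $\rho(\gamma)$ is loxodromic in $\PSL_2(\R)^n$. The previous identity rewrites as
\[
\langle \alpha,\lambda(\rho(\gamma))\rangle = \sum_i \alpha_i \lambda(\rho_i(\gamma)) = 0
\]
for every loxodromic $\rho(\gamma)$, where $\alpha=(\alpha_1,\dots,\alpha_n)$.

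Consequently, the entire limit cone $C(\rho(\G))\subset(\R^+)^n$ is contained in the hyperplane $H_\alpha=\{v\in\R^n:\langle\alpha,v\rangle=0\}$. Because $\alpha\neq 0$, this hyperplane has empty interior in $\R^n$, forcing $C(\rho(\G))$ to have empty interior as well. But since the metrics $m_1,\dots,m_n$ are pairwise distinct in $\Teich(S)$, Theorem \ref{Th-Zariski density} tells us that $\rho(\G)$ is Zariski dense in $\PSL_2(\R)^n$, and then Benoist's Theorem \ref{Benoist} guarantees that $C(\rho(\G))$ has non-empty interior. This contradiction shows that no such linear relation exists.

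The only nontrivial piece is the dictionary between currents and algebra: once one recognizes that $i(L_{m_i},c)$ encodes the translation length $\lambda(\rho_i(\gamma))$, the problem is entirely transferred from the (infinite-dimensional) space $\cC(S)$ to a finite-dimensional statement about Jordan projections, where the two algebraic inputs of the previous subsections — Zariski density and Benoist's limit cone — plug in directly. I do not anticipate a genuine obstacle beyond this translation; it is essentially a bookkeeping exercise verifying that every nontrivial element of $\G$ contributes a loxodromic element of $\rho(\G)$, which is automatic from discreteness and faithfulness.
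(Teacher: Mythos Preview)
Your proposal is correct and follows essentially the same route as the paper's own proof: pair the hypothetical relation with closed curves via the intersection form to obtain $\sum_i \alpha_i \ell_{m_i}(c)=0$, reinterpret this as the limit cone of the direct-sum representation lying in a proper hyperplane, and then invoke Theorem~\ref{Th-Zariski density} together with Benoist's Theorem~\ref{Benoist} for the contradiction. The only cosmetic difference is that the paper phrases the conclusion directly (the linear form must vanish, hence all $a_k=0$) rather than as a contradiction, and you are slightly more explicit about why every $\rho(\gamma)$ is loxodromic.
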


\begin{proof}
Let $(S_1, ..., S_n)$ a finite family of distinct hyperbolic surfaces, and denote by $L_k$ the corresponding Liouville currents. Suppose that there exists $(a_1, ... , a_n ) \in \R^n$ a family of real number such that 
$$\sum_{k=1}^n a_kL_k =0.$$
We have therefore, for all closed curve $c \in \C$: 
\begin{equation}\label{eq-relation on length}
\sum_{k=1}^n a_ki (L_k , c) = \sum_{k=1}^n a_k\ell_{S_k}(c) =0.
\end{equation}

Consider the holonomy representations $\rho_k : \pi_1(S) \tv \PSL_2(\R)$ and let $\rho = (\rho_1, ..., \rho_n)$ be the diagonal representation of $\G \tv \PSL_2(\R)^n$. 

By Theorem \ref{Th-Zariski density}, since  $(S_k)$ are $n$ distinct hyperbolic surfaces  $\rho(\G)$ is  Zariski dense. 
Therefore by Benoist's Theorem \ref{Benoist}, the limit cone of $\rho(\G)$, $\cC(\rho(\G)) \subset \fa^+ $ is of non-empty interior 

However, by Equation (\ref{eq-relation on length}), it is also contained in the kernel of the linear form  $\ell:  \R^n \tv \R$, $\ell(x_1, ..., x_n) = \sum_k a_kx_k$. A proper vector subspace of $\R^n$ has  empty interior. Therefore, the linear form $\ell$ is the zero map, and $(a_1, ..., a_n ) =(0,... , 0)$. 
\end{proof}

\section{Embedding of negatively curved metric}
We prove in this section the more general result that the space of negatively curved metric on $S$ with volume $1$ embedded totally independently in the space of geodesic current via Liouville's current. 

The result follows from an ergodic argument which has been suggested to us by P. Haissinky. 

Let $\mu$ be a geodesic current. Considering the measure $m = \mu \mathrm{d} t $ on  $\mathbb{S}^{(2)} \times \R$ we get a measure on the unitary tangent bundle of $T^1\tilde{S}$ invariant by the geodesic flow and by $\G$. The set of geodesic currents identify by this procedure to the measure on  $T^1 S$ invariant by the geodesic flow. From Liouville current we get the Liouville measure on the unitary tangent bundle of $S$. 

A classical result due to Anosov tells us that  in the case of negatively curved metric the Liouville measure is ergodic, see for example \cite[Theorem 10.11]{pesin}.

Let $X$ be a compact metric space and $\phi_t$ an action on  $X$. As a consequence of Birkhoff's ergodic theorem, for any ergodic measure $\mu$ on $X$, there exists a set $\Omega_\mu$  of full $\mu$-measure, called the generic points of $\mu$, satisfying that for every continuous function $f$ on $X$ and every point $x\in \Omega_\mu$ one has 
$$\lim_{T \tv \infty} \frac{1}{T}\int_0^T  f(\phi_t(x)) d t = \int_X f d\mu.$$
Therefore,  if we have $n$ ergodic probability measures $(\mu_1, ..., \mu_n)$ on $X$,  the set $\Omega_{\mu_i}$, for $i \in \{1,..., n\}$ are disjoint (in the measure theoretical sense). 

In particular, if we consider $n$ distinct Liouville currents $(L_1, ...., L_n)$ that we see as invariant measure on $T^1S$, we can associate  $n$ subset $\Omega_i $ on $T^1S$  such that 
$L_k(\Omega_l)=0$ for all $l\neq k$ and $L_k(\Omega_k) =1$. 

From Otal's Theorem \ref{Th-otal}, the map that associates the Liouville current to a negatively curved metric on $S$ is injective. Therefore if one has $n$ negatively curved metrics, one gets $n$ distinct Liouville currents. 

Now the proof of Theorem \ref{th-main neg curv} is straightforward. 
Let $a_1, ... , a_n\in \R^n$ be $n$ real numbers such that $\sum_k a_k L_k=0$, evaluate this equality on $\Omega_j$, this gives 
$a_j L_j(\Omega_j) = a_j =0$, and conclude the proof.

\bibliographystyle{alpha}

\end{document}